\theoremstyle{plain}
\newtheorem{theo}{Theorem}[section]
\newtheorem*{theo*}{Theorem}
\newtheorem{prop}{Proposition}[section]
\newtheorem*{lem*}{Lemma}
\theoremstyle{definition}
\newtheorem{claim}{Claim}[section]
\newtheorem{question}{Question}[section]
\theoremstyle{remark}
\newtheorem*{ack}{Acknowledgements}
\newtheorem*{rem*}{Remark}
\newcommand{\R}{\mathbb{R}}
\begin{document}

\title{A Note on the Passing Time of the Spherical Kepler Problem}

\author{Lei Zhao}
\address{Chern Institute of Mathematics, Nankai University/Institute of Mathematics, University of Augsburg}
\email{l.zhao@nankai.edu.cn}
%\thanks{Supported by }

\abstract 

In this note we collect some known facts concerning central projection correspondances and time parametrizations of Kepler problems in Euclidean spaces and on Spheres. 

\endabstract

\date\today
\maketitle

%\tableofcontents
\section{Introduction}
{With the help of central projection}, the Kepler problem on the sphere with a cotangent potential as defined by P. Serret \cite{Serret} can be effectively deduced from the classical Kepler problem in Euclidean space (\cite[Lecture 5]{Albouy}). The classical and spherical Kepler problems share many similarities. To name a few:

\begin{itemize}
\item The orbits are planar resp. spherical conic sections, with one focus at the attracting center (P. Serret \cite{Serret});
\item The energy depends only on the (geodesic) semi major axis (Killing \cite{Killing}, Velpry \cite{Velpry});
\item The assertion of Bertrand theorem (c.f. \cite[Lecture 3]{Albouy}) holds (Liebmann \cite{Liebmann}, Higgs \cite{Higgs}, Ikeda and Katayama\cite{IkedaKatayama}, Kozlov and Harin \cite{Kozlov}).
\end{itemize}

All these phenomena can be derived from the orbital correspondence of the corresponding systems in the {Euclidean space} and on the sphere via central projections. 

Concerning time parametrization of Kepler problem in Euclidean space, Lambert's theorem states the following: For simplicity we consider the planar problem. {For two points} $A_{1}$ and $A_{2}$ in a plane, the passing time from $A_{1}$ to $A_{2}$ along an arc of the Keplerian orbit with attracting center $O$ and semi major axis $a$ is a multivalued function of the energy of the orbit and the three mutual distances $c=|A_{1} A_{2}|, r_{1}=|A_{1} O|, r_{2}=|O A_{2}|$. Indeed, {when the energy is negative,} the energy determines the semi major axis length, which measures the size of the elliptic orbit, while $r_{1}, r_{2}, c$ subsequently determine (in a multi-value way) the eccentricity of the orbit. Lambert's theorem \cite{Lambert} asserts that this dependence on four variables can be effectively reduced to three: the energy, or equivalently the semi major axis $a$, the mutual distance $c$ and the sum $r_{1}+r_{2}$. {The two functions $c$ and $r_{1}+r_{2}$ are well-defined for all triples of positive real numbers $(r_{1}, r_{2}, c)$ and are independent of the semi major axis $a$. We note that {this fact} is also true for parabolic and hyperbolic orbits, corresponding respectively to cases with zero and positive Kepler energies.}

In the spherical problem, the passing time is analogously a multivalued function of the spherical energy and the three geodesic distances on the sphere. Again, the spherical energy determines the geodesic semi major axis. An analogue of Lambert's theorem on the sphere would thus be a similar reduction of number of variables on which  the passing time depends. 

\begin{question}\label{Question: 1} In the spherical Kepler problem, do there exist two functions $f, g$ of the three geodesic distances among the start point, the end point and the attracting center, so that the passing time from the {start point to the end point along an Keplerian arc of the orbit} can be expressed as a multivalued function only of these two functions $f, g$ and the geodesic semi major axis (or equivalently the energy) of the orbit?
\end{question}

In Section \ref{Section: Spherical Kepler Problem}, we shall recall the definition and some properties of the spherical Kepler problem from \cite[Lecture 5]{Albouy}. We then analyse the passing time along Keplerian arcs and Lambert's Theorem in Section \ref{Section: Passing Time Kepler Planar}.  In Section \ref{Section: Passing Time}, we analyze the passing time in the spherical problem. 

\textbf{Note: In early versions, the author thought have got an negative answer to Question \ref{Question: 1} with a computer-assisted proof.
However mistakes has been found with the help of a surprising proof communicated by Alain Albouy, which again uses the central projection correspondences and in particular shows that the answer to this question should be positive. }
The initial purpose of this note thus falls out.

\section{Central Projection and the Spherical Kepler Problem} \label{Section: Spherical Kepler Problem}
Let $(\mathbf{F}, \langle, \rangle)$ be a three-dimensional Euclidean space and 
$$\mathbf{E}:=\{q \in \mathbf{F}: <Z,q>=R\}$$
 be a plane in $\mathbf{F}$ for some {$Z \in \mathbf{F}$ of unit norm, and some $R>0$.}
For a particle $q$ moving in $\mathbf{F}$, let $h= \langle Z,q \rangle/R$ be the ``normalized height'' of $q$. For those $q \in F$ such that $\langle Z, q \rangle \neq 0$, we consider the motion of its central projection $q_\mathbf{E}=q/h$ in $\mathbf{E}$. 
We have 
$$ \dot{q}_\mathbf{E}=\dfrac{h \dot{q}- \dot{h } q}{h^2},\, \dfrac{d}{dt}(h^2 \dot{q}_\mathbf{E})=h \ddot{q}-\ddot{h} q.$$  
%\begin{figure}
%\center
%\includegraphics[width=50mm]{f311.eps}
%\caption{Central Projection (from \cite{Albouy})}
%\end{figure}

\begin{figure}
\center
\includegraphics[width=70mm]{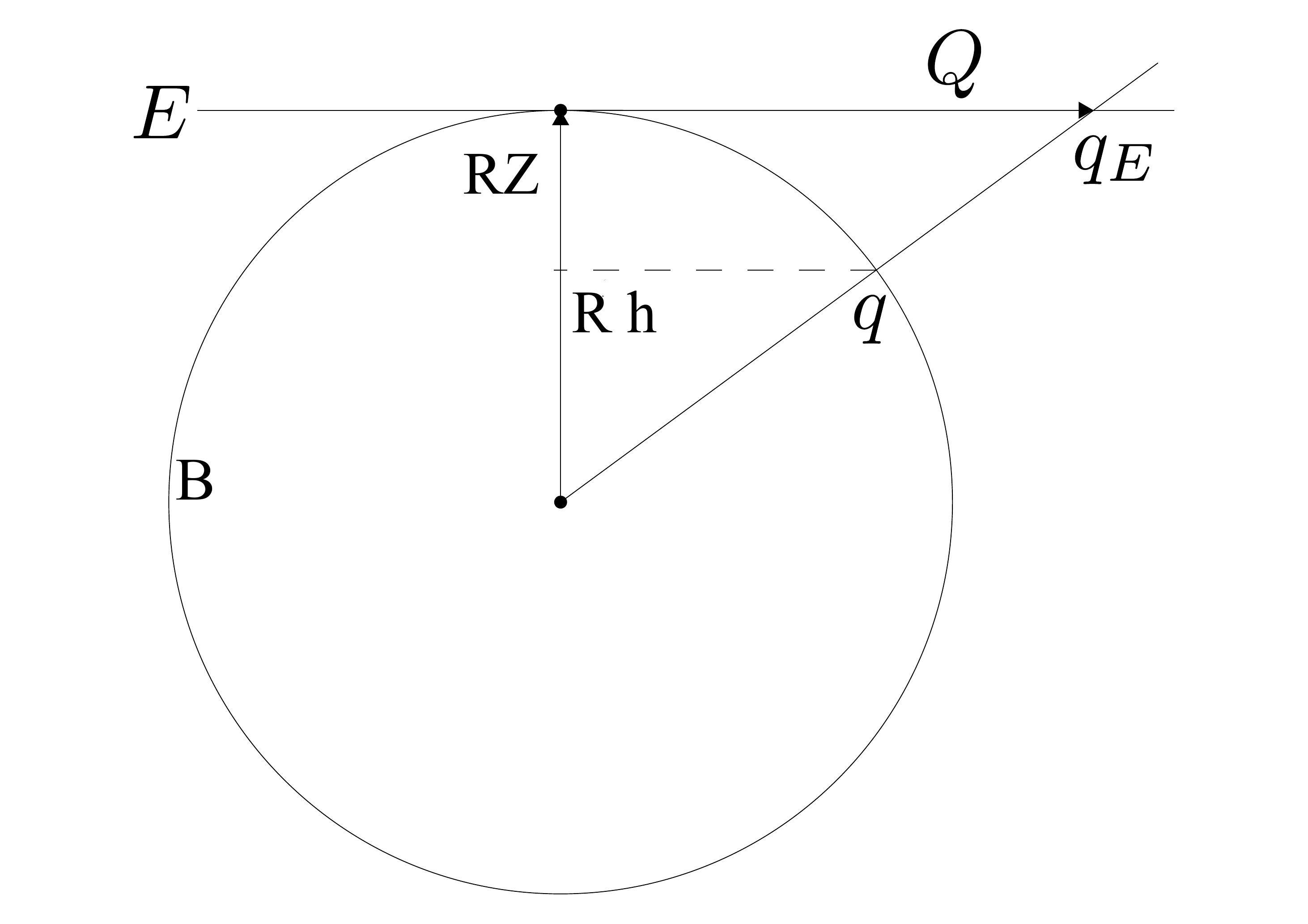}
\caption{Central projection of a sphere (from \cite{Albouy})}
\end{figure}

Let the motion of $q$ be given by $\ddot{q}=\psi(q)+\lambda q$, where $\lambda$ is a function of $q,\dot{q}$ and also possibly a function of time $t$. We have then 
\begin{equation}\label{eq: tau t}
\dfrac{d}{dt}(h^2 \dot{q}_\mathbf{E})=h \psi(q) - \dfrac{<Z,\psi(q)>}{R} q.
\end{equation}

Denote by $B:=B(R)$ the sphere in $\mathbf{F}$ centered at the origin with radius $R$. {From now on, the motion of $q$ is restricted on $B$.} We have

\begin{prop}(Appell \cite{Appell}, see also \cite{Albouy}) \label{Correspondence} A force field on $B$ induces a force field on $\mathbf{E} \subset \mathbf{F}$, s.t. the central projection of the moving particle $q \in B$ moves under the induced force field on $\mathbf{E}$ up to a time reparametrization $d \tau=h^{-2} d t$.
\end{prop}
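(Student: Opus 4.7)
The plan is to read Proposition~\ref{Correspondence} off from identity \eqref{eq: tau t} by pushing the $t$-derivative on its left-hand side through the reparametrization $d\tau = h^{-2}\,dt$. Since $\tfrac{d}{dt}=h^{-2}\tfrac{d}{d\tau}$, we have $h^{2}\dot{q}_{\mathbf{E}}=\tfrac{dq_{\mathbf{E}}}{d\tau}$ and therefore $\tfrac{d}{dt}(h^{2}\dot{q}_{\mathbf{E}})=h^{-2}\tfrac{d^{2}q_{\mathbf{E}}}{d\tau^{2}}$. Substituting this into \eqref{eq: tau t} and clearing the overall $h^{-2}$ transforms the equation of motion on $B$ into
\[
\frac{d^{2}q_{\mathbf{E}}}{d\tau^{2}} \;=\; h^{3}\psi(q)\;-\;\frac{h^{2}\langle Z,\psi(q)\rangle}{R}\,q.
\]

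Next I will show that the right-hand side depends only on $q_{\mathbf{E}}$, so that it genuinely defines a vector field on $\mathbf{E}$. On each of the two open hemispheres of $B$ cut out by $\langle Z,q\rangle\neq 0$, the central projection $q\mapsto q_{\mathbf{E}}=q/h$ is a diffeomorphism onto $\mathbf{E}$, with inverse $q=h\,q_{\mathbf{E}}$ where $h=\pm R/\|q_{\mathbf{E}}\|$ (the sign fixed by the hemisphere; this follows from imposing $\|q\|=R$). Substituting $q=h\,q_{\mathbf{E}}$ above produces a well-defined expression
\[
\Psi(q_{\mathbf{E}}) \;:=\; h^{3}\psi(h\,q_{\mathbf{E}}) \;-\; \frac{h^{3}\,\langle Z,\psi(h\,q_{\mathbf{E}})\rangle}{R}\,q_{\mathbf{E}},
\]
which is my candidate for the induced force field on $\mathbf{E}$.

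For consistency I will then verify that $\Psi(q_{\mathbf{E}})$ is tangent to $\mathbf{E}$, that is $\langle Z,\Psi(q_{\mathbf{E}})\rangle = 0$; using $\langle Z,q\rangle = Rh$ this is a one-line calculation. It is also worth remarking that the constraint term $\lambda q$ in $\ddot q = \psi(q)+\lambda q$ plays no role in the construction, for if one substitutes $\lambda q$ in place of $\psi(q)$ on the right-hand side of \eqref{eq: tau t} one gets $h\lambda q - \tfrac{\langle Z,\lambda q\rangle}{R}q = h\lambda q - \lambda h q = 0$. This is exactly why \eqref{eq: tau t} already involves only $\psi$, and why the passage from the constrained dynamics on $B$ to the free dynamics on $\mathbf{E}$ is clean after the time change.

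The main obstacle here is essentially bookkeeping rather than ideation: the content is a direct calculation, and the only potentially delicate points are correctly tracking the powers of $h$ produced by the reparametrization and justifying the smoothness of the inverse of the central projection on each hemisphere where $h\neq 0$. Neither presents a real difficulty, so the proof will be short.
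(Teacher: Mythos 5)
Your proposal is correct and follows essentially the same route as the paper, which leaves the proof implicit in the computation surrounding Eq.~\eqref{eq: tau t}: the identity $\tfrac{d}{dt}(h^{2}\dot q_{\mathbf{E}})=h^{-2}\tfrac{d^{2}q_{\mathbf{E}}}{d\tau^{2}}$ combined with \eqref{eq: tau t} is exactly the manipulation the paper performs later for the special case $\psi=b\,Z_{B}$. Your added observations (that the constraint term $\lambda q$ cancels, and that the induced field is tangent to $\mathbf{E}$ and depends only on $q_{\mathbf{E}}$ via $q=h\,q_{\mathbf{E}}$) are accurate and merely make explicit what the paper takes for granted.
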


A central field on $B$, with an attracting center at the contact point $O=R Z$ of $\mathbf{E}$ and $B$, thus naturally induces a central field on $\mathbf{E}$. When the induced central field on $\mathbf{E}$ is that of the Kepler problem, the corresponding force field on $B$ defines the spherical Kepler problem. 

The unit normal vector $Z$ of $\mathbf{E}$ {can be extended} to a constant vector field {in $\mathbf{F}$ normal to $\mathbf{E}$ } for which we still denote it by $Z$. By projecting $Z$ orthogonally to the tangent planes of $B$ we get a central force field $Z_B=Z-\dfrac{h}{R} q$ on $B$, in which $h=\dfrac{\langle Z,q \rangle}{R}$. The centers of the central force field are respectively $R Z$  and $-R Z$. Any other central force field with the same centers can be expressed as $b(q) \, Z_B$, where $b: B \setminus \{R Z,-R Z\} \rightarrow \R$ is a scalar function.

In order to calculate the central force field on $\mathbf{E}$ corresponding to $b \, Z_B$, let $Q=q_\mathbf{E}-R Z$, and recall that $\tau$ is a new time variable defined by $ d\tau=h^{-2} d t$. From Eq.\eqref{eq: tau t} with $\psi(q)=b Z_{B}$, we have 
$$\dfrac{d^2 Q}{d \tau^2}=\dfrac{d^2 q_\mathbf{E}}{d \tau^2}=h^{2} \dfrac{d}{dt} (h^{2} \dot{q}_{\mathbf{E}})=bh^2(h Z_B-\dfrac{\langle Z,Z_B \rangle}{R} q)=bh^2 (h Z- \dfrac{q}{R})=-\dfrac{b h^3 Q}{R}$$ 
and 
\begin{equation}\label{eq: doth}
h=\dfrac{R}{\|q_\mathbf{E}\|} =\dfrac{R}{\sqrt{R^2+\|Q\|^2}}=\dfrac{1}{\sqrt{1+\frac{\|Q\|^2}{R^2}}},
\end{equation}
which is equivalent to
$$\|Q\|=R h^{-1} (1-h^{2})^{\frac12}.$$

In order to have 
$$\dfrac{d^2 Q}{d \tau^2}=-\|Q\|^{-3} Q,$$
i.e. the induced central force field defines the Kepler problem\footnote{We suppose throughout this note that the masses are unit.} on $\mathbf{E}$, we need to have 
$$\dfrac{b h^3}{R}=\|Q\|^{-3}, \hbox{ that is } b=R^{-2} (1-h^2)^{-\frac{3}{2}}.$$

By Proposition \ref{Correspondence}, the Keplerian orbits on $B$ are spherical conics (c.f.  \cite{SykesPeirce}) and the attracting center is one of their foci. Its energy takes the form 
$$\mathcal{E}=\dfrac{1}{2} \|\dot{q}\|^{2}+U(q),$$
in which the function $U(q)$ (that we suppose moreover to be homogeneous to eliminate the constant of integration) satisfies 
$$\nabla U(q)=\psi(q)=b \, Z_{B}.$$ 
By a direct calculation, we find
$$U(q)=-\dfrac{\langle q, Z \rangle }{R \sqrt{\|q\|^{2}-\langle q, Z \rangle^{2} }}=-\dfrac{h}{R \sqrt{1-h^{2}}}.$$

We now write the energy $\mathcal{E}$ with the semi major axis $a$ and the eccentricity $e$ of the projected Keplerian orbit in $\mathbf{E}$:

\begin{claim} 
\begin{equation}\label{Eq: Energies}
\mathcal{E} =-\dfrac{1}{2 a} + \dfrac{a (1-e^{2})}{2 R^{2}}.
\end{equation}
\end{claim}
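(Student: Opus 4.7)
The plan is to transport the two planar conservation laws of the Kepler motion $Q = q_\mathbf{E} - RZ$ in $\mathbf{E}$ back to the spherical motion $q$ on $B$ and read off $\mathcal{E}$ directly. The two conservation laws are the Kepler energy $\tfrac12\|dQ/d\tau\|^2 - 1/\|Q\| = -1/(2a)$ and the squared angular momentum $\|Q\times dQ/d\tau\|^2 = a(1-e^2)$. Since $\|Q\| = Rh^{-1}(1-h^2)^{1/2}$, the spherical potential $U(q) = -h/(R\sqrt{1-h^2})$ is exactly $-1/\|Q\|$, so the potential terms match automatically. What remains is to relate the kinetic terms $\tfrac12\|\dot q\|^2$ (in the time $t$) and $\tfrac12\|dQ/d\tau\|^2$ (in the time $\tau$), and one expects the needed correction to come from the planar angular momentum.

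First I would compute $\|dQ/d\tau\|^2$. Since $q$ lies on $B$ we have $\|q\|=R$ and $\langle q, \dot q\rangle = 0$, and since $h=\langle Z,q\rangle/R$ we have $\dot h = \langle Z,\dot q\rangle /R$. From $dQ/d\tau = h\dot q - \dot h\, q$ (immediate from $d\tau = h^{-2}\,dt$ and the formula for $\dot q_\mathbf{E}$ recorded at the start of Section \ref{Section: Spherical Kepler Problem}), a direct expansion using $\langle q,\dot q\rangle =0$ gives $\|dQ/d\tau\|^2 = h^2\|\dot q\|^2 + R^2\dot h^2$.

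Next I would compute the squared angular momentum through the identity $\|Q\times dQ/d\tau\|^2 = \|Q\|^2\|dQ/d\tau\|^2 - \langle Q, dQ/d\tau\rangle^2$. Using $Q = q/h - RZ$, a short calculation (in which the $\langle q,\dot q\rangle$ terms vanish and the remaining cross terms cancel) yields $\langle Q, dQ/d\tau\rangle = -R^2\dot h/h$. Combined with $\|Q\|^2 = R^2(1-h^2)/h^2$, the algebra collapses to $\|Q\times dQ/d\tau\|^2 = R^2(1-h^2)\|\dot q\|^2 - R^4 \dot h^2$. Equating this with $a(1-e^2)$ and dividing by $R^2$ gives $(1-h^2)\|\dot q\|^2 - R^2\dot h^2 = a(1-e^2)/R^2$; adding this to the previous identity $h^2\|\dot q\|^2 + R^2\dot h^2 = \|dQ/d\tau\|^2$ produces the clean relation $\|\dot q\|^2 = \|dQ/d\tau\|^2 + a(1-e^2)/R^2$.

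Plugging this and $U(q) = -1/\|Q\|$ into $\mathcal{E} = \tfrac12\|\dot q\|^2 + U(q)$ and invoking the Kepler vis-viva relation $\tfrac12\|dQ/d\tau\|^2 - 1/\|Q\| = -1/(2a)$ yields the claimed formula. The only delicate step is the algebraic manipulation involved in simplifying the angular momentum; conceptually, the whole content of the identity is that the conserved planar Kepler angular momentum supplies precisely the constant shift relating the spherical and planar kinetic energies.
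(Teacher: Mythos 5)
Your argument is correct and reaches the paper's proof through essentially the same route: both establish the key identity $\|\dot q\|^2 = \|dQ/d\tau\|^2 + C^2/R^2$ with $C=\|Q\times dQ/d\tau\|$, and then conclude from $\mathcal{E}=E+C^2/(2R^2)$ together with $E=-1/(2a)$ and $C^2=a(1-e^2)$. The only difference is cosmetic: you expand $dQ/d\tau=h\dot q-\dot h\,q$ using the sphere constraints $\|q\|=R$, $\langle q,\dot q\rangle=0$, whereas the paper expands $\dot q=\dot h\,q_{\mathbf{E}}+h\,\dot q_{\mathbf{E}}$ and eliminates $\dot h$ via $\dot h=-h^3\langle\dot Q,Q\rangle/R^2$.
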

\begin{proof}In terms of $Q$, the expression of $U$ is 
$$U=-\dfrac{h}{R\sqrt{1-h^{2}}}=-\dfrac{1}{\|Q\|}.$$

Now we calculate $ \|\dot{q}\|^{2}$. From $q=h \, q_{\mathbf{E}}$ we get $\dot{q}=\dot{h} \, q_{\mathbf{E}}+ h \, \dot{q}_{\mathbf{E}}$. Note also $q_{\mathbf{E}}=R Z+Q$. We have thus $\dot{q}_{\mathbf{E}}=\dot{Q}$ and $\langle\dot{q}_{\mathbf{E}},q_{\mathbf{E}}\rangle=\langle\dot{Q},Q\rangle$. From \eqref{eq: doth}, we also have 
$$\dot{h}=-\dfrac{\langle \dot{Q},Q \rangle}{R^{2} (\sqrt{1+\|Q\|^{2}/R^{2}})^{3}}=-\dfrac{ h^{3} \langle \dot{Q},Q \rangle}{R^{2}}.$$
 Therefore
\begin{align*}
 \|\dot{q}\|^{2}&=\langle \dot{h} \, q_{\mathbf{E}}+h \, \dot{q}_{\mathbf{E}},\dot{h} \, q_{\mathbf{E}}+h \, \dot{q}_{\mathbf{E}}\rangle \\&=\dot{h}^{2} \, \|q_{\mathbf{E}}\|^{2}+h^{2}\, \|\dot{q}_{\mathbf{E}}\|^{2}+ 2 h \, \dot{h}\, \langle q_{\mathbf{E}},\dot{q}_{\mathbf{E}}\rangle \\&=\dot{h}^{2} \, \dfrac{R^{2}}{h^{2}}+h^{2} \, \|\dot{Q}\|^{2}+ 2 h \, \dot{h} \langle Q,\dot{Q}\rangle \\&=h^{2} \, \|\dot{Q}\|^{2}-\dfrac{h^{4}}{R^{2}} \, \langle Q, \dot{Q} \rangle^{2} \\&=h^{-2} \, \|\dfrac{d Q}{d \tau}\|^{2}-\dfrac{1}{R^{2}} \, \langle Q, \dfrac{d Q}{d \tau} \rangle^{2} \\&= \|\dfrac{d Q}{d \tau}\|^{2} + \dfrac{1}{R^{2}} \, (\|Q\|^{2} \|\dfrac{d Q}{d \tau}\|^{2}-\langle Q, \dfrac{d Q}{d \tau} \rangle^{2})\\&= \|\dfrac{d Q}{d \tau}\|^{2} + \dfrac{C^{2}}{R^{2}}.
\end{align*}
In which $C:= \|Q \times \dfrac{d Q}{d \tau}\|$ is the norm of the angular momentum of the Keplerian orbit in $\mathbf{E}$. 
As by definition
$$\mathcal{E}=\dfrac{1}{2} \|\dot{q}\|^{2}-\dfrac{1}{\|Q\|},$$
we have
\begin{equation*}
\mathcal{E}=E+\dfrac{C^{2}}{2 R^{2}},
\end{equation*}
where we have denoted by $E$ the energy of the projected Kepler problem in $\mathbf{E}$. We {thus get Eq. \eqref{Eq: Energies} from} the classical expressions $E=-\dfrac{1}{2 a}$, $C^{2}=a(1-e^{2})$.
\end{proof}

{On the other hand, if we denote the geodesic major axis by $R \theta_{a}$, where $\theta_{a}$ is the maximal central angle of the spherical ellipse. It is already known to Killing  (c.f. \cite{Killing}, \cite{Velpry}) that $\theta_{a}$ is only a function of $\mathcal{E}$. Let us calculate a little more to see that this is indeed the case: By central projection, the pericenter and apocenter of the spherical ellipse in $B$ is projected respectively to the pericenter and apocenter of the projected ellipses in $\mathbf{E}$. The line passing through the origin of $\mathbf{F}$ and the point $R Z$ is perpendicular to $\mathbf{E}$, and divides the angle $\theta_{a}$ into two angles $\theta'$ and $\theta''$. The major axis of the projected ellipse in $\mathbf{E}$ is divided by the focus point $O=R Z$ into two segments with length $a(1+e)$ and $a(1-e)$ respectively. As the vector $R Z$ has length $R$, we have 
$$\{\tan \theta', \tan \theta''\}=\{\dfrac{a(1-e)}{R}, \dfrac{a(1+e)}{R}\}.$$
Therefore 
$$\tan \theta_{a}=\tan (\theta' +\theta'')=\dfrac{\frac{2 a}{R}}{1-\frac{a^{2} (1-e^{2})}{R^{2}}}=\dfrac{1}{R(\frac{1}{2 a}-\frac{a (1-e^{2})}{2 R^{2}})}=-\dfrac{1}{R\mathcal{E}}.$$
And thus for fixed $R$, the angle $\theta_{a}$ is only a function of $\mathcal{E}$.}

\section{Lambert Theorem of the Kepler Problem} \label{Section: Passing Time Kepler Planar}
For two points $A_{1}$ and $A_{2}$ in $\mathbf{E}$, the passing time $\Delta T$ from $A_{1}$ to $A_{2}$ along a Keplerian arc of an elliptic orbit with semi major axis $a$ can be expressed as a function of the three mutual distances $r_1=|OA_1|$, $r_2=|OA_2|$, $c=|A_1 A_2|$ and $a$. In general, there are two ellipses passing through the points $A_{1}, A_{2}$ with semi major axis $a$. We denote by $u$ the eccentric anomaly of a point on an elliptic orbit, and denote by $u_{1}, u_{2}$ the eccentric anomalies of $A_{1}, A_{2}$ respectively. There are yet many choices for these angles and thus, it is seen that $\Delta T$ is a multivalued function of $r_{1}, r_{2}, c$ and $a$.

Nevertheless, once $(u_{1}, u_{2}, e, a)$ are chosen, we deduce directly from the Kepler equation that
\begin{claim} 
\begin{equation}\label{calculation of Kepler passing time}
\Delta T = \int_{u_1}^{u_2} a^{\frac{3}{2}}\, (1-e \cos u)\,du .
\end{equation}
\end{claim}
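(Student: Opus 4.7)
The plan is to derive the stated formula as an essentially immediate consequence of the Kepler equation and the standard relation between the eccentric anomaly and the mean anomaly. Since the unit of time is fixed by taking unit mass and unit gravitational parameter (the Kepler problem $\ddot{Q}=-\|Q\|^{-3}Q$), the mean motion is $n=a^{-3/2}$, so Kepler's equation reads
\begin{equation*}
M = u - e \sin u, \qquad M = n\,(t-t_{0}) = a^{-3/2}(t-t_{0}),
\end{equation*}
where $t_{0}$ is the time of pericenter passage. I would first recall (or state without proof, referring to the classical derivation) this equation, since it is a textbook fact for the planar Keplerian ellipse.

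Next, I would solve for $t$ and differentiate with respect to $u$. From $t-t_{0}=a^{3/2}(u-e\sin u)$ one obtains directly
\begin{equation*}
\frac{dt}{du}=a^{3/2}(1-e\cos u),
\end{equation*}
which is strictly positive for $e\in[0,1)$, confirming that $u\mapsto t$ is monotonic along the orbit.

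Finally, once a choice $(u_{1},u_{2},e,a)$ consistent with the endpoints $A_{1},A_{2}$ is fixed, the passing time is obtained by integrating this differential relation:
\begin{equation*}
\Delta T = \int_{t_{1}}^{t_{2}} dt = \int_{u_{1}}^{u_{2}} \frac{dt}{du}\,du = \int_{u_{1}}^{u_{2}} a^{3/2}(1-e\cos u)\,du,
\end{equation*}
which is the claimed formula. There is no real obstacle here: the only subtle point worth a remark is the multivaluedness already discussed in the preceding paragraph, which is precisely reflected in the freedom of choosing the branch of $u_{1},u_{2}$ (and adding multiples of $2\pi$ for revolutions), so no further care beyond fixing these choices is needed.
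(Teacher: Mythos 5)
Your derivation is correct and follows exactly the route the paper indicates: the paper states that the formula is deduced ``directly from the Kepler equation'' $t-t_{0}=a^{3/2}(u-e\sin u)$ and gives no further detail, and your differentiation $dt/du=a^{3/2}(1-e\cos u)$ followed by integration is precisely that deduction made explicit. Nothing is missing.
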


Following Lagrange \cite{Lagrange}, we define two angles $\phi,\psi$ by the relations
\begin{equation}\label{Coordinate Change 1}
\left\{\begin{array}{l}
\phi=\arccos \bigl(e \cos((u_{1}+u_{2})/2)\bigr); \\
\psi=(u_{1} - u_{2})/2.
\end{array}
\right.
\end{equation}
The change of coordinates from $(u_{1}, u_{2}, e)$ to $(\phi, \psi, e)$ is regular when 
$$e \neq 0, e \neq 1, u_{1}+u_{2} \neq 0\, (mod\, 2 \pi). $$
{In general this change of coordinates is given by a two-to-one mapping,} with $(u_{1}, u_{2}, e)$ and $(u'_{1}=-u_{2}, u'_{2}=-u_{1}, e)$ corresponding to the same value of $(\phi, \psi, e)$.  

By substitution, we see that $\Delta T$, as a function of $(\phi, \psi, e, a)$, does not depend on $e$:
$$\Delta T =a^{\frac{3}{2}} (u_{2}-u_{1}-e (\sin u_{2}-\sin u_{1}))=a^{\frac{3}{2}}(-2 \psi + 2 \sin \psi \cos \phi).$$ 
{On the other hand, the following relations have been deduced by Lagrange in \cite[pp. 564-566]{Lagrange}}
\begin{equation} \label{relation edges angles}
\left\{\begin{array}{l}
r_1+r_2=2a \,(1-\cos \psi \cos \phi) \\
c=2a \,  |\sin \psi \sin \phi|,
\end{array}
\right.
\end{equation}
{which allows to express $\psi$ and $\phi$ as multi-valued functions only of $r_1+r_2$, $a$ and $c$.} We may thus conclude with Lagrange that 
\begin{theo} (Lambert \cite{Lambert}) $\Delta T$ is a multi-valued function of $c$, $r_{1}+r_{2}$ and $a$.
\end{theo}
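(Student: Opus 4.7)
The plan is to show that once we introduce the Lagrange half-sum/half-difference angles $\alpha = \psi + \phi$ and $\beta = \psi - \phi$, both the passing time formula $\Delta T = 2 a^{3/2}(-\psi + \sin\psi \cos\phi)$ and the Lagrange relations \eqref{relation edges angles} decouple into functions of $\alpha$ and $\beta$ separately.

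First, I would rewrite $\Delta T$ in terms of $(\alpha, \beta, a)$. Since $\psi = (\alpha+\beta)/2$ and the product-to-sum identity yields $\sin\psi \cos\phi = \tfrac{1}{2}(\sin\alpha + \sin\beta)$, we get the symmetric expression
\begin{equation*}
\Delta T \;=\; a^{3/2}\bigl[(\sin\alpha - \alpha) + (\sin\beta - \beta)\bigr].
\end{equation*}
Next, I would apply the cosine addition formulas to the Lagrange relations \eqref{relation edges angles}: since $\cos\psi\cos\phi = 1 - \tfrac{r_1+r_2}{2a}$ and $\sin\psi\sin\phi = \pm\tfrac{c}{2a}$, the identities $\cos\alpha = \cos\psi\cos\phi - \sin\psi\sin\phi$ and $\cos\beta = \cos\psi\cos\phi + \sin\psi\sin\phi$ give
\begin{equation*}
\cos\alpha \;=\; 1 - \frac{r_1+r_2 \pm c}{2a}, \qquad \cos\beta \;=\; 1 - \frac{r_1+r_2 \mp c}{2a}.
\end{equation*}
Thus $\alpha$ and $\beta$ are recovered from $(c, r_1+r_2, a)$ via the inverse cosine, which of course is multi-valued (and the $\pm$ signs indicate the two natural choices corresponding to the two ellipses through $A_1$ and $A_2$).

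Combining these two steps, $\Delta T$ is expressed as a combination of $\sin\alpha - \alpha$ and $\sin\beta - \beta$, each of which is determined by $(c, r_1+r_2, a)$ up to a discrete set of branch choices (sign of $\alpha, \beta$ and integer multiples of $2\pi$). This gives exactly the multi-valued dependence claimed, and the variable $e$ has disappeared entirely.

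The only mild subtlety I anticipate is bookkeeping: the coordinate change from $(u_1, u_2, e)$ to $(\phi, \psi, e)$ is already two-to-one (as noted in the excerpt), and a further ambiguity appears when inverting $\cos\alpha, \cos\beta$ and choosing signs for $\sin\alpha, \sin\beta$. Since the statement only asserts a \emph{multi-valued} functional dependence, I would not attempt to enumerate the branches explicitly; the point is simply that all remaining freedom lives in the choice of preimage under the elementary trigonometric functions, not in any residual dependence on $e$, $r_1$, or $r_2$ individually beyond $r_1 + r_2$.
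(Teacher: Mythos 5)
Your proposal is correct and follows essentially the same route as the paper: both arguments rest on the $e$-independence of $\Delta T$ in Lagrange's variables $(\phi,\psi)$ together with the relations \eqref{relation edges angles}, which determine $(\phi,\psi)$ multivaluedly from $(r_1+r_2, c, a)$. Your extra substitution $\alpha=\psi+\phi$, $\beta=\psi-\phi$ only makes the paper's implicit dependence explicit as the classical Lambert equation $\Delta T = a^{3/2}\bigl[(\sin\alpha-\alpha)+(\sin\beta-\beta)\bigr]$.
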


Many proofs of Lambert's theorem were found in the history. In \cite{AlbouyLambert}, a timeline of these proofs can be found, together with two new approaches to this theorem.

\section{Period of periodic orbits in the spherical Kepler problem} \label{Section: Passing Time}
We shall now consider the corresponding spherical Kepler problem on $B$ defined above. The point $O$ is the North pole of $B$. Let $B_1, B_2$ be two points on the north hemisphere. Let $s_1, s_2, d$ be respectively the geodesic distances $O B_1$, $O B_2$ and $B_1 B_2$ on $B$, and let $\theta_1=s_1/R$, $\theta_2=s_2/R$, $\theta=d/R$. Let $\Delta' T$ be the passing time for a particle to move from $B_1$ to $B_2$ along a spherical Keplerian orbit in a spherical ellipse with energy $\mathcal{E}$. 

We shall eventually analyze $\Delta' T$ in $\mathbf{E}$ while keeping the time reparametrization in mind. We suppose that one of the ellipses determined by $O$, $B_{1}, B_{2}$ and $\mathcal{E}$ lies entirely in the north hemisphere, so that it projects to an ellipse in $\mathbf{E}$. 

Let $r=\|Q\|$ be the distance of the projected particle $q_{\mathbf{E}}$ to $O$. In terms of the elliptic elements of the projected ellipse in $\mathbf{E}$, we have 
$$r=a (1-e \cos u)$$
 and
$$h=\dfrac{1}{\sqrt{1+\frac{r^{2}}{R^{2}}}}=\dfrac{R}{\sqrt{R^{2}+a^{2} (1-e\cos u)^{2}}}.$$
{Also, by differentiating the Kepler equation in $\mathbf{E}$, we obtain}
$$d \tau =a^{\frac32} (1-e \cos u) d u.$$
{Thus with $d t = h^{2}d \tau=\dfrac{d \tau}{1+\frac{r^{2}}{R^{2}}}$, we obtain}
\begin{equation}\label{calculation of passing time}
\Delta' T = \int_{t_{1}}^{t_{2}} d t =\int_{\tau_{1}}^{\tau_{2}} \dfrac{1}{1+\frac{r^{2}}{R^{2}}} d \tau = \int_{u_1}^{u_2} a^{\frac{3}{2}} \dfrac{1-e \cos u}{1+\frac{a^2}{R^2}(1-e \cos u)^2} \,du.
\end{equation}

By normalization we set $R=1$ from now on.

The period of the spherical elliptic motion has been calculated in \cite{Killing}, \cite{Kozlov}, and is found to depend only on $\mathcal{E}$. Indeed, we may calculate the integral 
$$\mathbf{T}=\int_{0}^{2 \pi} a^{\frac{3}{2}} \dfrac{1-e \cos u}{1+a^2(1-e \cos u)^2} \,du$$
{by decomposing the integrand as}
$$a^{\frac{3}{2}} \dfrac{1-e \cos u}{1+a^2(1-e \cos u)^2}=\dfrac{\sqrt{a}}{2} (\dfrac{1}{i +a(1-e \cos u)}+\dfrac{1}{-i +a(1-e \cos u)})$$
which allows us to integrate them out separately. We thus get
$$\mathbf{T}=\pi \sqrt{a} (\dfrac{1}{\sqrt{a^{2}(1-e^{2})-1+2 a i}}+ \dfrac{1}{\sqrt{a^{2}(1-e^{2})-1-2 a i}})$$
{in which both complex square roots are meant to have positive real parts.}

With $R=1$,  Eq. \eqref{Eq: Energies} now reads
\begin{equation}\label{eq: energy with unit radius}
\mathcal{E}=-\dfrac{1}{2 a} +\dfrac{a (1-e^{2})}{2}
\end{equation}
thus
\begin{equation}\label{eq: relation for integration}
a^{2} (1-e^{2})-1=2 a \mathcal{E}.
\end{equation}

{From this,  we have}
$$\mathbf{T}=\dfrac{\pi \sqrt{2}}{2} (\dfrac{1}{\sqrt{ \mathcal{E}+ i}}+ \dfrac{1}{\sqrt{ \mathcal{E}- i}}).$$
{By squaring this expression and taking square root, we obtain in the end}
$$\mathbf{T}=\dfrac{\pi \sqrt{\mathcal{E} + \sqrt{\mathcal{E}^{2}+1}}}{\sqrt{\mathcal{E}^{2}+1}}.$$

\begin{ack} Thanks to Alain Albouy, Otto van Koert and the referee for many discussions and helps.
\end{ack}

\end{document}